\newcommand{\mM}{\mathcal{M}}
\newtheorem{theorem}{Lemma}
\title{The conjugate locus in convex 3-manifolds}
\author{Thomas Waters and Matthew Cherrie}
\begin{document}

\maketitle

\begin{abstract}
In this paper we study the conjugate locus in convex manifolds. Our main tool is Jacobi fields, which we use to define a special coordinate system on the unit sphere of the tangent space; this provides a natural coordinate system to study and classify the singularities of the conjugate locus. We pay particular attention to 3-dimensional manifolds, and describe a novel method for determining conjugate points. We then make a study of a special case: the 3-dimensional (quadraxial) ellipsoid. We emphasise the similarities with the focal sets of 2-dimensional ellipsoids.
\end{abstract} 

\noindent MSC codes: 53C22, 58K05, 34C40.

\section{Introduction}

The conjugate locus is a classical topic in global Differential Geometry, and one of the driving open problems has been the `last geometric statement of Jacobi': that the conjugate locus of a generic point on a triaxial ellipsoid has four cusps \cite{jacobi}. The question was settled recently by Itoh and Kiyohara, who also showed the statement extends to the broader class of surfaces known as Liouville surfaces \cite{Itoh1,Itoh2}. It seems natural to ask how these results extend to higher dimensions, and indeed Itoh and Kiyohara in \cite{Itoh3,Itoh4} have considered the cut and conjugate locus in $n$-dimensional ellipsoids and Liouville manifolds. Naturally their analysis relies heavily on the integrability of the geodesic flow in these manifolds, however integrability is the exception rather than the norm; see \cite{TWspherical, TWmonge, TWetds} for example. A guiding principle in this and other works of the authors is to develop methods for studying the conjugate locus in manifolds {\it without} relying on integrability.  

The conjugate locus, and its cousin the cut locus, has been studied by some of the giants of Mathematics: Jacobi \cite{jacobi}, Poincar\'{e} \cite{poincare} and Arnol'd \cite{arnold} to name a few. As well as investigations related to the Jacobi statement (see for example \cite{Sinclair1, Sinclair2, Sinclair3, Sinclair4, Sinclair5}) and a testbed for developments in Hamiltonian dynamics (see for example \cite{mcoff1,mcoff2}), there is also interest in the application to problems in optimal control (see for example \cite{Bonnard1, Bonnard2, Bonnard3, Bonnard4, Bonnard5, bloch}). Previous work of the first author showed how the number of cusps can sponaneously change as the base point is dragged around in the surface \cite{TWbif}, and the connection between the number of cusps and the rotation index \cite{TWrot}. However (aside from \cite{Itoh4,mcoff1} mentioned above and perhaps \cite{hannes}) there is very little in the literature regarding the conjugate locus in dimensions three or higher; a notable exception being Ardoy \cite{ardoy} which showed the generic singularities of the exponential map in 3-manifolds come in five varieties: fold, edge, swallowtail, elliptic and hyperbolic umbilic. We note that these are also the five generic singularities of the normal map, i.e.\ the envelope of normals to surfaces in $\mathbb{R}^3$ (also known as the evolute or focal set). There is much more in the literature about these focal sets, in particular see \cite{porteous,izumiya}, and the focal sets of surfaces are typcially made up of two sheets of focal points. A classic example would be the focal sets of the triaxial ellipsoid (or 2-ellipsoid as we will refer to it), whose picture is strangely absent from the literature (but see \cite{jandr}). We present in this work for the first time global images of the conjugate locus in the 3-ellipsoid, and the similarities with the focal sets of the 2-ellipsoid are striking; indeed a recurring theme in this paper (and others of the first author \cite{TWrot}) would be the similarity between the focal set on one hand and the conjugate locus on the other. This is continuing the train of thought in Jacobi's work, who originally framed his famous statement by saying the conjugate locus on the triaxial ellipsoid has ``die gestalt der evolute der ellipse'' \cite{jacobi}. We suggest the following extension to Jacobi's statement:

\medskip

\noindent {\it Suppose $p$ is a point in an $n$-dimensional ellipsoid with principal curvatures $k_1,\ldots,k_n$. Then the conjugate locus of $p$ has the same form as the focal sets of the $(n-1)$-dimensional ellipsoid with semi-axes $k_1,\ldots,k_n$.} 

\medskip

This conjecture was presented by the first author at Laboratoire Dieudonn\'{e}, Nice, in January 2019, several months before the publication of \cite{Itoh4}, and we hope the current work complements \cite{Itoh4} while extending our understanding of this most important of example manifolds. We note that by ``the same form'' we mean the same number of components, the same singularity structure, even the same topology in the sense of rotation index. For example when $n=2$ we know the conjugate locus of an umbilic point in the triaxial ellipsoid is a point (as is the evolute of a circle), and the conjugate locus of a generic point is a single curve with 4 cusps and rotation index 1 (as is the evolute of an ellipse).

As well as making a study of the 3-ellipsoid, the main contribution of this paper lies in developing methods for studying the conjugate locus in convex smooth 3-dimensional Riemannian manifolds in general (note we make the restriction of convexity, by which we mean positive sectional curvatures in all directions everywhere, so that conjugate points exist in all directions; however this restrictions is too strong if we simply want a non-empty conjugate locus, as the spherical harmonic surfaces of \cite{TWbif} demonstrate). Other authors relied on integrability \cite{Itoh4} or contact \cite{porteous} however for convex 3-manifolds in general we cannot rely on either of those things, and so we describe a novel approach in Sections 2 and 3: Section 2 using Jacobi fields to define a new coordinate system on the unit sphere in $T_p\mathcal{M}$, and Section 3 describing a novel approach to detecting conjugate points. We feel the additional understanding gained justifies our choice of Jacobi fields as the main tool rather than the marching fronts of \cite{hannes} or bifurcations of a Hamiltonian boundary value problem in \cite{mcoff2}. In Section 4 we make a detailed case study of the 3-dimensional ellipsoid, and we finish with some conclusions and suggestions for future work.

We follow the conventions of DoCarmo \cite{docarmoriemm}, and we let $t$ be arc-length parameter and $'$ denote $d/dt$. We are most interested in 3-dimensional manifolds, but in Section 2 we begin with $n$-dimensions.

\section{The Jacobi coordinate system}

Let $p$ be a point in the $n$-dimensional smooth Riemannian manifold $\mM$ and split $T_p \mM$ as $\mathbb{R}^+\times\mathbb{S}^{n-1}_1$. Let $U$ be a coordinate patch on $\mathbb{S}^{n-1}_1$ and let $v_i$ (where from now on $i,j$ will run from 1 to $n-1$) parameterize $U$; we will assume $U$ is `generic' in the sense to be clarified below. We define the exponential map at $p$ as (it is convenient to use ``$X$'' rather than ``$exp$'') \[ X:\mathbb{R}^+\times U\to\mM : X(t,v_i)=\gamma(t) \] where $\gamma(t)$ is the geodesic through $p$ whose tangent vector at $p$ is the point in $U$ with coordinates $v_i$ (see Figure \ref{jacocoords} below).

Conjugate points are where the exponential map is singular, but since $\partial X/\partial t=\gamma'(t)$ the co-rank of $DX$ is at most $n-1$. We define the Jacobi fields $Y_i$ as $\partial X/\partial v_i$, and since we are interested in conjugate points we will from now on only consider Jacobi fields which meet the following conditions: all $Y_i$ have $Y_i(t=0)=0$, and $Y'_i(t=0)$ (what distinguishes one Jacobi field from another) is taken from the orthogonal complement of $\gamma'(0)$. As we follow a radial geodesic emanating from $p$ we may reach points where a non-trivial Jacobi field vanishes for the first time (by which we mean this is the first time since $t=0$ that that particular Jacobi field has vanished), and these are the `first conjugate points'; the set of all first conjugate points is the `first conjugate locus' which may have many sheets as we will see. We will assume for the time being that $\mathcal{M}$ is such that for all radial geodesics in $U$, all the conjugate points occur at distinct values of $t$; this is what we meant by `generic' above (this is equivalent to saying that all singularities of $X$ over $U$ are co-rank 1, or $\Sigma^1$ to use the notation of \cite{jandr}). 








We will now use these conjugate points to define a special set of coordinates on $U$. Let $R_1,R_2,\ldots,R_{n-1}$ be the ordered distance along a radial geodesic until a first conjugate point. Since they are distinct, each corresponds to the vanishing of a distinct member of a set of $n-1$ linearly independent Jacobi fields, which we denote $J_i$, distinguished by $n-1$ linearly independent choices of $J_i'(t=0)$, which span the orthogonal complement of $\gamma'(t=0)$. We now let $u_i$ be the coordinate on $U$ whose coordinate lines have tangent vector $J_i'(t=0)$ (see Figure \ref{jacocoords}). This is a smooth parameterization of $U$: if $U$ is generic then at every point in $U$ there are $n-1$ linearly independent $J_i'(0)$ tangent to $\mathbb{S}^{n-1}_1$ (the $u_i$ coordinate lines are their integral curves), and for smoothness we note that by definition $J_i(t=R_i(u),u)=0$ and differentiating w.r.t.\ $u_j$ we find \[ J_i'\frac{\partial R_i}{\partial u_j}=-\frac{\partial J_i}{\partial u_j}. \] The right hand side is smooth w.r.t.\ $u$ since Jacobi fields are the solutions of ODE's whose coefficients depend smoothly on $u$; therefore the left hand side is also smooth w.r.t.\ $u$. These special coordinates, which we will refer to as `Jacobi coordinates', parameterize both the patch $U$ on the unit sphere in $T_p\mM$ and the conjuate locus itself, and provide a natural coordinate system in which to describe the structure of the conjugate locus; some results are collected as Lemmas:

\begin{figure}
\begin{center}
\includegraphics[width=0.7\textwidth]{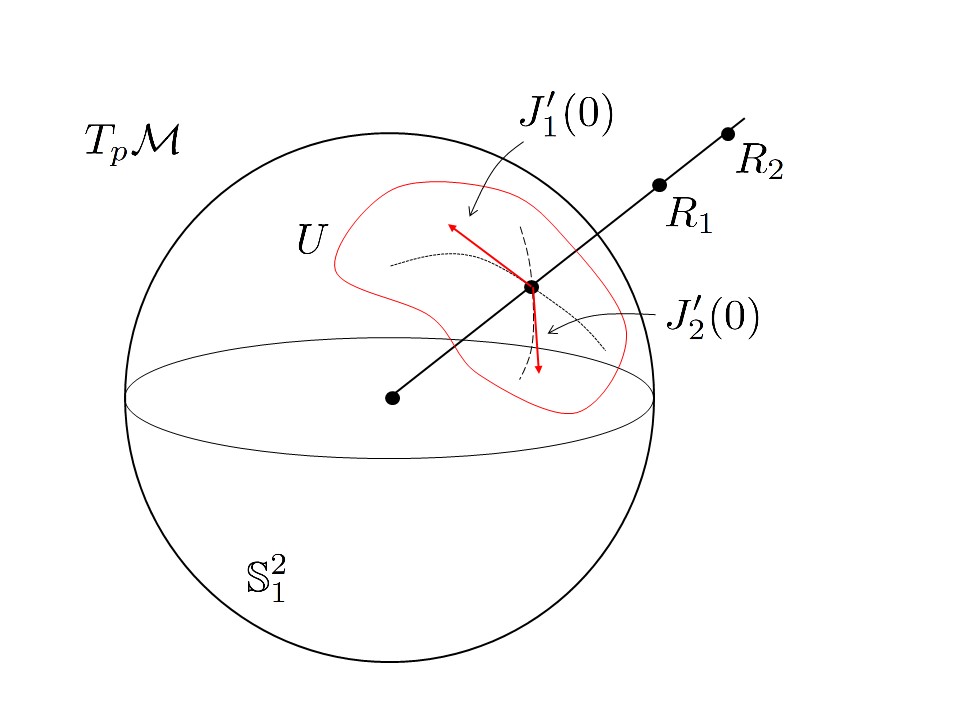}
\caption{The Jacobi coordinates in the case that $\mM$ is 3 dimensional. The $u_1$ coordinate line ($u_2=$ constant) and $u_2$ coordinate line ($u_1=$ constant) are shown dotted and dashed respectively.}\label{jacocoords}
\end{center}
\end{figure}

\begin{theorem}
 The first conjugate locus is regular except for points where the distance to the $i$-th first conjugate point is stationary with respect to the $i$-th Jacobi coordinate.
\end{theorem}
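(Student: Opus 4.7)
I would parameterize the $i$-th sheet of the first conjugate locus by the map $C_i:U\to\mM$, $C_i(u):=X(R_i(u),u)$, and study the rank of its differential. Differentiating and using $\partial X/\partial t=\gamma'$, $\partial X/\partial u_j=J_j$, together with the defining relation $J_i(R_i(u),u)=0$, one finds
\[
\frac{\partial C_i}{\partial u_i}=\frac{\partial R_i}{\partial u_i}\,\gamma'(R_i),\qquad
\frac{\partial C_i}{\partial u_j}=\frac{\partial R_i}{\partial u_j}\,\gamma'(R_i)+J_j(R_i,u)\ \ (j\ne i).
\]
Jacobi fields with $J(0)=0$ and $J'(0)\perp\gamma'(0)$ stay orthogonal to $\gamma'$, so each $J_j(R_i,u)$ lies in the hyperplane $\gamma'(R_i)^\perp\subset T_{\gamma(R_i)}\mM$.

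The first substantive step is to establish that the $n-2$ vectors $\{J_j(R_i,u)\}_{j\ne i}$ are linearly independent (and in particular nonzero) in that hyperplane. Suppose $\sum_{j\ne i}c_jJ_j(R_i,u)=0$; then $Y:=\sum_{j\ne i}c_jJ_j$ is a Jacobi field vanishing at $t=0$ and at $t=R_i$. The genericity assumption (all singularities of $X$ are co-rank one, i.e.\ $\Sigma^1$) forces the space of Jacobi fields vanishing at both endpoints to be one-dimensional, spanned by $J_i$. Since $Y$ is a combination of the $J_j$ with $j\ne i$ and the $J_k$ are a basis of the $(n-1)$-dimensional space of allowable Jacobi fields, every $c_j$ must vanish. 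The same co-rank one hypothesis prevents any individual $J_j(R_i,u)$ from vanishing for $j\ne i$, since otherwise both $J_i$ and $J_j$ would vanish at $R_i$ and the multiplicity there would exceed one. I expect this translation of the $\Sigma^1$ hypothesis into a kernel statement for the exponential map to be the main obstacle.

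Once this linear independence is in place, the conclusion is immediate. If $\partial R_i/\partial u_i\ne 0$, then $\partial C_i/\partial u_i$ spans the $\gamma'(R_i)$ direction, and projecting the remaining $\partial C_i/\partial u_j$ ($j\ne i$) onto $\gamma'(R_i)^\perp$ leaves the linearly independent vectors $J_j(R_i,u)$; hence $dC_i$ has full rank $n-1$ and the sheet is regular at $u$. If instead $\partial R_i/\partial u_i=0$, then $\partial C_i/\partial u_i=0$ outright, $dC_i$ has rank at most $n-2$, and the sheet is singular there. This gives the stated dichotomy and completes the lemma.
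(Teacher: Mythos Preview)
Your proof is correct and follows the same approach as the paper: parameterize the $i$-th sheet as $C_i(u)=X(R_i(u),u)$, differentiate to get $\partial C_i/\partial u_j=(\partial R_i/\partial u_j)\,\gamma'+J_j(R_i)$, and observe that the rank drops precisely when $\partial R_i/\partial u_i=0$ since $J_i(R_i)=0$. Your treatment is in fact more careful than the paper's, which simply asserts that ``$\gamma'$ and $J_j$ are linearly independent'' without explicitly invoking the $\Sigma^1$ (co-rank one) hypothesis to rule out dependencies among the $J_j(R_i)$ for $j\ne i$; your kernel argument fills that small gap.
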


\begin{proof} We define the $i$-th sheet of the first conjugate locus as \[ c_i=X(t=R_i(u),u). \] The Jacobian of the $i$-th sheet has rows \begin{align} \frac{\partial c_i}{\partial u_j}=\frac{\partial X}{\partial t}\frac{\partial R_i}{\partial u_j}+\frac{\partial X}{\partial u_j}=\gamma' \frac{\partial R_i}{\partial u_j}+J_j(t=R_i).\label{bian} \end{align} Since $\gamma'$ and $J_j$ are linearly independent, we see that each sheet of the conjugate locus is regular in general, however if $j=i$ then since $J_i(t=R_i)=0$ we see the rank of the Jacobian of $c_i$ will be non-maximal if $\partial R_i/\partial u_i=0$. We will refer to these $\Sigma^{1,1}$ singularities (when restricted to $U$) as `ridges' and their image under $X$ as `ribs', in analogy with the Euclidean case described in the introduction.\end{proof}

The most intuitive case (see Sections 3 and 4 below for examples) is when the manifold $\mM$ has dimension 3; then the ridge points form lines on the unit 2-sphere in $T_p\mM$. For the rest of this paper we will suppose $n=3$.

\begin{theorem} The ribs on the sheets of the conjugate locus are ordinary cuspidal edges.\end{theorem}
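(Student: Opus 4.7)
The plan is to verify the ordinary cuspidal edge normal form $(s,t^2,t^3)$ by Taylor-expanding the parameterization of each conjugate sheet around a ridge point. Fix a sheet $c_1(u_1,u_2) = X(R_1(u),u)$ and use the $\Sigma^{1,1}$ hypothesis to choose local coordinates on $U$ so that the ridge lies along $u_1 = 0$. The rib is then the smooth curve $q(u_2) := c_1(0,u_2)$ with tangent
\[ q'(u_2) = R'(u_2)\gamma' + J_2, \]
where $R(u_2) := R_1(0,u_2)$ and $J_2$ is evaluated at $(R(u_2),0,u_2)$; the generic assumption $R_1 \ne R_2$ guarantees $J_2 \ne 0$ here.

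The key computational step evaluates $\partial^k c_1/\partial u_1^k$ at the ridge for $k=2,3$ by implicit differentiation of the identity $J_1(R_1(u),u) \equiv 0$. Differentiating once and imposing $\partial R_1/\partial u_1 = 0$ on the ridge forces $\partial^2 X/\partial u_1^2 = 0$ there; differentiating once more gives $\partial^3 X/\partial u_1^3|_{\mathrm{ridge}} = -R_1^{(2)} J_1'(R)$, where $R_1^{(2)} := \partial^2 R_1/\partial u_1^2 \ne 0$ at an ordinary $\Sigma^{1,1}$ point. Combining these with the chain rule applied to $c_1$ yields the Taylor expansion
\[ c_1(u_1,u_2) - q(u_2) = \tfrac{1}{2} R_1^{(2)} \gamma'\, u_1^2 + \bigl(\tfrac{1}{6} R_1^{(3)} \gamma' + \tfrac{1}{3} R_1^{(2)} J_1'(R)\bigr) u_1^3 + O(u_1^4). \]

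To exhibit the ordinary cuspidal edge it suffices to show that $q'$, $\partial^2 c_1/\partial u_1^2$, and $\partial^3 c_1/\partial u_1^3$ are linearly independent at the ridge, for then a smooth change of coordinates in source and target brings $c_1$ to the normal form $(s,t^2,t^3)$. The $u_1^2$-coefficient is a nonzero multiple of $\gamma'$; modulo $\gamma'$ the $u_1^3$-coefficient reduces to $J_1'(R)$, which is nonzero because $R$ is a simple zero of $J_1$, and $q'$ reduces to $J_2$. Since every Jacobi field along $\gamma$ with $J(0) = 0$ satisfies $\langle J,\gamma' \rangle \equiv 0$ (and hence $\langle J',\gamma' \rangle \equiv 0$), both $J_1'(R)$ and $J_2$ lie in the $2$-plane $(\gamma')^\perp$, so the required independence boils down to these two vectors being non-parallel. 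I expect this last non-parallelism to be the main obstacle: it is not forced by the generic $\Sigma^1$ hypothesis of Section 2, and must either be imposed as an extra transversality condition or absorbed into the notion of \emph{ordinary} before the cuspidal edge conclusion can be drawn.
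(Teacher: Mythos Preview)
Your argument is essentially the paper's own: both apply the Porteous criterion, compute $c_{1,uu}=R_{1,uu}\gamma'$, $c_{1,uuu}=R_{1,uuu}\gamma'+2R_{1,uu}J_1'$ and $c_{1,v}=R_{1,v}\gamma'+J_2$ at a ridge point, and reduce the required linear independence to the single question of whether $J_1'(R_1)$ and $J_2(R_1)$ are parallel in $(\gamma')^\perp$. Your implicit-differentiation step $\partial^2 X/\partial u_1^2\big|_{t=R_1}=0$ in fact fills in a detail the paper states rather tersely.

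Where you stop, however, the paper keeps going. You flag the non-parallelism of $J_1'$ and $J_2$ as an unresolved transversality condition, but the paper actually proves it. Writing $[J_1,J_2]$ for the $2\times2$ determinant of the $(N,B)$-components, one has
\[
\frac{d}{dt}[J_1,J_2]\Big|_{t=R_1}=[J_1',J_2]\Big|_{t=R_1}+[J_1,J_2']\Big|_{t=R_1}=[J_1',J_2]\Big|_{t=R_1},
\]
since $J_1(R_1)=0$. The key observation is that at a \emph{non-umbilic} conjugate point the co-rank is exactly one, so only one Jacobi field in the pencil vanishes and it does so with a simple zero; hence $t\mapsto[J_1,J_2](t)$ has a \emph{simple} zero at $t=R_1$, forcing $[J_1',J_2](R_1)\neq 0$, i.e.\ $J_1'(R_1)$ and $J_2(R_1)$ are linearly independent. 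So the condition you worried about is not an extra hypothesis but a consequence of the standing generic ($\Sigma^1$) assumption $R_1\neq R_2$.
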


\begin{proof} If we label the two Jacobi coordinates $u_1=u$ and $u_2=v$, with corresponding Jacobi fields $J_u$ and $J_v$, then the first sheet of the first conjugate locus is given by $c_1=X(t=R_1(u,v),u,v)$. The image under $X$ of the ridge line $R_{1,u}=0$ will be an ordinary cuspidal edge if \cite{porteous} \[ c_{1,u}=0,\qquad \{ c_{1,uu},c_{1,uuu},c_{1,v}\}\ \textrm{linearly independent.} \] We have already seen that at a ridge point $c_{1,u}=0$, and under the same condition we find \begin{align*}
c_{1,uu}&=R_{1,uu}X_{,t}=R_{1,uu}\gamma',  \\ c_{1,uuu}&=2R_{1,uu}X_{,tu}+R_{1,uuu}X_{,t}=2R_{1,uu}J_u'+R_{1,uuu}\gamma', \\ c_{1,v}&=R_{1,v}X_{,t}+X_{,v}=R_{1,v}\gamma'+J_v
\end{align*} where everything is evaluated on $t=R_1$. Since the Jacobi fields and their derivatives lie in the orthogonal complement of $\gamma'$, and since $J_v(t=R_1)\neq 0$ in a generic coordinate patch, then assuming higher derivatives of $R_1$ do not vanish simultaneously with the first we see these three vectors will be linearly indepenent if $J_u'$ and $J_v$ are linearly independent on $t=R_1$. This is certainly the case if $\mathcal{M}=\mathbb{S}^3$ (indeed they are orthogonal), but in general we would need to show linear independence. Both vector fields are non-zero: we already know $J_v$ cannot vanish on $t=R_1$, but also $J_u'$ cannot vanish on $t=R_1$ since $J_u$ satisfies a second order homogeneous ODE and we already have $J_u(t=R_1)=0$. Finally we must show that: \begin{align} J_u' \ \textrm{and}\ J_v\ \textrm{are not multiples of each other}. \label{cuspstat} \end{align} We will see in the next section (see \eqref{j1j2det}) how the method we describe for detecting conjugate points answers this question very naturally.\end{proof}

\begin{theorem} Away from ridge points, the image of the Jacobi coordinate lines under the exponential map are geodesics of the corresponding sheet of the conjugate locus.\end{theorem}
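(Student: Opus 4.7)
The plan is to take the $u_i$-coordinate line $u_j=\mathrm{const}$ (with $j\neq i$), parameterize its image on the $i$-th sheet as $\alpha(u_i)=c_i(u_i,u_j)$, and verify the geodesic condition directly. From \eqref{bian} and the defining property $J_i(R_i)=0$, the tangent is $\alpha'=R_{i,u_i}\gamma'$, which is already parallel to the radial geodesic itself. The tangent plane to $c_i$ at $\alpha(u_i)$ is spanned by $\gamma'$ and $J_j(t=R_i)$, so $\alpha$ will be a geodesic of $c_i$ (up to reparameterization) precisely when the component of $\nabla_{\alpha'}\alpha'$ along $J_j(R_i)$ vanishes.

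The next step is a direct computation of $\nabla_{\alpha'}\alpha'$. Using $\nabla_{X_t}X_t=0$ (since $\gamma$ is a geodesic) and the symmetry of the Levi-Civita connection, which gives $\nabla_{X_{u_i}}X_t=\nabla_{X_t}X_{u_i}=J_i'$, one obtains
\[ \nabla_{\alpha'}\alpha'=R_{i,u_iu_i}\,\gamma'+R_{i,u_i}\,J_i'(t=R_i). \]
The first term is already along $\alpha'$, and the standard Jacobi-field identity (differentiating $\langle J_i,\gamma'\rangle$ twice and using the symmetries of the curvature tensor) gives $J_i'(R_i)\perp\gamma'$. Everything therefore comes down to showing $J_i'(R_i)\perp J_j(R_i)$, which would place $J_i'(R_i)$ in the normal direction to the sheet.

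This last orthogonality is the key (and only non-routine) step. It follows from the constancy along $\gamma$ of the Lagrange bracket $\langle J_i,J_j'\rangle-\langle J_i',J_j\rangle$, a standard consequence of the Jacobi equation $J''=-R(J,\gamma')\gamma'$ together with the symmetries of $R$. Since the Jacobi coordinates are built from fields with $J_i(0)=J_j(0)=0$, the bracket vanishes identically; evaluating at $t=R_i$ with $J_i(R_i)=0$ then forces $\langle J_i'(R_i),J_j(R_i)\rangle=0$, which is exactly the orthogonality required. Consequently $J_i'(R_i)$ is normal to the sheet, the tangential part of $\nabla_{\alpha'}\alpha'$ reduces to $R_{i,u_iu_i}\gamma'$, and away from ridges (where $R_{i,u_i}\neq 0$) this is parallel to $\alpha'$; hence $\alpha$ is a geodesic of $c_i$ after arc-length reparameterization.
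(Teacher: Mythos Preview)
Your proof is correct and takes a genuinely different route from the paper. The paper never computes the ambient covariant derivative; instead it reads off the first fundamental form of the $i$-th sheet directly from \eqref{bian}: since $J_i(R_i)=0$ and $J_j\perp\gamma'$, the induced metric collapses to $ds^2=dR_i^2+|J_j(R_i)|^2\,du_j^2$, which is already in semi-geodesic form, so the $u_j=\mathrm{const}$ curves are geodesics with $R_i$ as arc-length parameter. That argument needs only the elementary orthogonality $J_j\perp\gamma'$. Your extrinsic computation instead isolates $\nabla_{\alpha'}\alpha'$ and invokes the constancy of the Lagrange bracket $\langle J_i,J_j'\rangle-\langle J_i',J_j\rangle$ to obtain the stronger fact $J_i'(R_i)\perp J_j(R_i)$, thereby identifying $J_i'(R_i)$ as the unit normal direction to the sheet. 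This is geometrically illuminating (and generalizes verbatim to $n>3$, giving $J_i'(R_i)\perp J_j(R_i)$ for every $j\neq i$), but it is more than the paper's shorter metric argument requires.
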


\begin{proof} If we take the first sheet for example, then using \eqref{bian} the line element for $c_1$ is \[ ds^2=R_{1,u}^2du^2+2R_{1,u}R_{1,v}dudv+(R_{1,v}^2+|J_2(t=R_1)|^2)dv^2, \] or (using $dR_1=R_{1,u}du+R_{1,v}dv$) we can write this as \[ ds^2=dR_1^2+|J_2(t=R_1)|^2dv^2. \] Now it is standard to show that the lines $v=$ constant (the image under $X$ of the $u$ Jacobi coordinate lines) are geodesics of the first sheet of the conjugate locus. \end{proof}

Previously we assumed $U$ only contained points defining directions along which the distances to first conjugate points were distinct, but if we consider all directions on the unit sphere in $T_p\mathcal{M}$ there will be some directions where $R_1=R_2$, and we refer to these $\Sigma^2$ singularities as `umbilic directions', again in analogy with the Euclidean case. That these umbilic directions must exist follows from the definition of the Jacobi coordinate system: if there were no umbilic directions then we would be able to cover the 2-sphere in a single coordinate system which we know is not possible (it seems natural to suppose an extenison of the Caratheodory conjecture \cite{guilf} in this context). We are starting to see the general picture emerge: the Jacobi coordinate system covers the unit sphere except at degenerate umbilic directions; $R_1$ and $R_1$ are functions over the sphere which must therefore have stationary points through which pass the ridge lines; if the Jacobi coordinate curves are closed then $R_1$ or $R_2$ restricted to them must have stationary points which correspond to ridge lines of different types, and so on.

There are many ways to view the objects described in this section: in Section 4 we show plots of the first conjugate locus in the 3-ellipsoid, but another option is to plot the `distance spheres', i.e.\ the polar surfaces $r=R_1$ and $r=R_2$ in $T_p\mathcal{M}$; these would sit one inside the other, and touch along the umbilic directions. It is the image of these distance spheres under the exponantial map which give the first conjugate locus, however it is difficult to see the ridges on these distance spheres (unlike the vertices of the distance curves when $n=2$, see for example \cite{myers},\cite{TWbif}). Another possibility is to define on the unit sphere of $T_p\mathcal{M}$ the function $1/(R_1R_2)$, a sort of `global Gauss curvature'. Finally, as the point $p$ is dragged around in $\mathcal{M}$ we would expect the ridge lines on the unit sphere in $T_p\mathcal{M}$ (see Fig \ref{JC0} for example) to deform and perhaps undergo spontaneous transformations, of the type described in \cite{TWbif} for $n=2$ and \cite{porteous},\cite{soto} for the Euclidean setting; but for now we turn our attention to detecting conjugate points and hence the Jacobi coordinates and ridge lines described in this section.

\section{Detecting conjugate points}

The key question in this section is: how do we detect conjugate points along a unit speed geodesic $\gamma$ emanating from some point $p$ in $\mathcal{M}$? This question is reasonably straightforward in dimension 2: we simply solve the following initial value problem \[ \xi''+K|_\gamma\, \xi=0,\quad \xi(0)=0,\ \xi'(0)=1 \] (where $K$ is the Gauss curvature) until $\xi=0$; in a convex manifold this is guaranteed to happen within $t\leq \pi/\sqrt{K_{min}}$. However the complication in manifolds of dimension 3 is that there is now enough `room' for neighbouring geodesics to pass {\it behind} $\gamma$ (conjugate points are typically defined as we have above in terms of the vanishing of Jacobi fields, but the more intuitive picture of intersecting neighbouring geodesics can be made formal in the variational framework \cite{gelf}). Our problem is that not all orthogonal Jacobi fields will vanish at a conjugate point, and we do not know {\it a priori} which Jacobi fields to track. What we seek is some scalar along $\gamma$ that vanishes (or even better changes sign) at a conjugate point. We emphasise that the methods we wish to develop are to apply to convex 3-manifolds in general, without assuming integrability or symmetry for example.

Let $\gamma$ be a unit speed geodesic emanating from $p\in\mathcal{M}$ and let $\{\gamma'=T,N,B\}$ be an orthonormal triple parallel transported along $\gamma$ (in general there is no {\it a priori} choice for $N$ and $B$ since $\gamma'$ is an arbitrary direction in $T_p\mathcal{M}$). We write a Jacobi field along $\gamma$ as \[ J=\beta T+ \xi N+\eta B. \] All Jacobi fields along $\gamma$ satisfy the same Jacobi equation
\begin{align}
    \frac{D^{2}J}{dt^{2}}+R(\gamma'(t), J)\gamma'(t) = 0, \label{jacobieq}
\end{align} what distinguishes one Jacobi field from another is the initial conditions; as we are interested in detecting points conjugate to $p$ we set $\beta\equiv 0$. This leads to the linear system \begin{align}
   \begin{pmatrix}\xi'' \\ \eta''\end{pmatrix}=-\begin{pmatrix}
    (T,N,T,N) & (T,N,T,B) \\
    (T,N,T,B) & (T,B,T,B)
    \end{pmatrix} \begin{pmatrix}\xi \\ \eta\end{pmatrix} \label{jacobiequations}
\end{align} where $(A,B,C,D)=\left\langle R(A,B)C,D\right\rangle$. This symmetric time-dependent matrix has sectional curvatures on the diagonal and contractions of the Ricci tensor on the off-diagonal.

We now distinguish solutions to \eqref{jacobiequations} by their initial conditions. The constraint of $J(0)=0$ implies that $\xi(0)=\eta(0)=0$, leaving us with a 2-d space of initial conditions in which to search for conjugate points. Due to the linearity of \eqref{jacobiequations} we are able to rescale the $(\xi'(0),\eta'(0))$ initial conditions to lie on the unit circle, to arrive at a 1-parameter space of admissable Jacobi fields. To realise this we define two linearly independent Jacobi Fields, $J_\xi$ and $J_\eta$, whose initial conditions are given by
\begin{align}   
     \{\xi(0)=\eta(0)=0,\xi'(0)&=1, \eta'(0)=0 \}, \label{jxi} \\
    \{\xi(0)=\eta(0)=0,\xi'(0)&=0, \eta'(0)=1 \} \label{jeta}
\end{align} respectively, and the follwing linear combination captures the family of Jacobi field of interest:
\begin{align}
    {J}_{\alpha}={J}_{\xi}\cos{\alpha}+{J}_{\eta}\sin{\alpha},\ \alpha \in [0,2\pi]. \label{nbellipse}
\end{align} For any value of $t$, $J_{\alpha}$ is an ellipse in the $\{N,B\}$ plane, with each point on the ellipse corresponding to a specific Jacobi field, growing and shrinking and rotating as we move along $\gamma$. When we reach a conjugate point a Jacobi field vanishes, meaning that $J_{\alpha}$ collapses to a line segment at that value of $t$. If $J_\xi=\xi_1 N+\eta_1 B$ and $J_\eta=\xi_2 N+\eta_2 B$, then we define 
\begin{align}
    [J_\xi,J_\eta]=\begin{vmatrix} \xi_1 & \xi_2 \\ \eta_1 & \eta_2 \end{vmatrix}. \label{ellipseareaeq}
\end{align} 
The area of the ellipse in the $\{N,B\}$ plane is proportional to $[J_\xi,J_\eta]$ which vanishes at a conjugate point. Thus, plotting this scalar versus $t$ allows us to find the values of $t=R_{1}$ and $t=R_{2}$. Even better, the ellipse must pass through itself (or change orientation) are we pass a conjugate point and so this scalar will change sign, which makes detecting zeroes easier. This is because conjugate points must be simple zeroes of Jacobi fields (we cannot have a non-trivial $J$ which satisfies both $J=0$ and $J'=0$ at some point), and so the vanishing Jacobi field must change sign at a conjugate point and thus the ellipse changes orientiation: the area of the ellipse has a simple zero at a conjugate point (the exception to this is at an umbilic direction). This is made more clear if we derive an equation for the area of the ellipse as follows: if $J_1$ and $J_2$ are arbitrary Jacobi fields then \begin{align}
\frac{d}{dt}[J_1,J_2]=[J_1',J_2]+[J_1,J_2'], \label{j1j2det}\\ \frac{d^2}{dt^2}[J_1,J_2]=-Tr(M)[J_1,J_2]+2[J_1',J_2'] \label{mdd}
\end{align} where $M$ is the matrix in \eqref{jacobiequations} and $Tr(M)=2Ric_\gamma(\gamma')$. Unfortunately due to the inhomogeneous term in \eqref{mdd} we cannot simply solve this scalar equation instead of \eqref{jacobiequations}, however it is also this term we gives non-trivial solutions given the trivial initial data $[J_1,J_2](0)=\tfrac{d}{dt}[J_1,J_2](0)=0$. Nonetheless we can use \eqref{j1j2det} to answer the question posed in \eqref{cuspstat} in the previous section  regarding $J_u'$ and $J_v$ when showing the ridges lift to ordinary cuspidal edges: if $J_1=J_u$ and $J_2=J_v$ then evaluating \eqref{j1j2det} at $t=R_1$ (assumed distinct from $R_2$) we see \begin{align} \frac{d}{dt}[J_u,J_v]\bigg\vert_{t=R_1}=([J_u',J_v]+[J_u,J_v'])\bigg\vert_{t=R_1}=[J_u',J_v]\bigg\vert_{t=R_1}\neq 0
\end{align} since the area of the ellipse has a simple zero at a non-umbilic conjugate point and therefore $J_u'$ and $J_v$ are linearly independent at $t=R_1$. 

When $\mathcal{M}=\mathbb{S}^3_r$ then the area of the ellipse is a multiple of $\sin^2(t/r)$, with a double root at $t=\pi r$, and this double root is the case for umbilic directions. However when we perturb away from the sphere then generically the graph of $[J_\xi,J_\eta]$ will dip below the axis giving two distinct roots, and these are the values $t=R_1$ and $t=R_2$. Now we know the locations of conjugate points along a specific geodesic emanating from $p$, we simply do the same for a spread of directions over the unit sphere in $T_p\mathcal{M}$, and we can then plot the sheets of the first conjugate locus $c_1$ and $c_2$; see the next section for some examples. 

Now we know the distance functions $R_1,R_2$ we can draw the Jacobi coordinate system as follows: for a particular geodesic emanating from $p$, we find the value of $\alpha$ where $|J_\alpha(R_1)|=0$, say $\alpha_1$, and this gives $J_u'(0)=\cos\alpha_1 N(0)+\sin\alpha_1 B(0)$. We then follow this tangent vector field across the unit sphere to give a Jacobi coordinate line; a similar process for $J_v'(0)=\cos\alpha_2 N(0)+\sin\alpha_2 B(0)$ where $\alpha_2$ is the value of $\alpha$ where $|J_\alpha(R_2)|=0$. Finally for the ridge points we simply evaluate $R_1$ (or $R_2$) along the $J_u'(0)$ (or $J_v'(0)$) Jacobi coordinate line just described, and pick out stationary values. In the next section we will implement the methods described in an important example: the 3-dimenisonal ellipsoid.

\section{Case study - 3d ellipsoid}

Our previous analysis was for convex 3-manifolds in general, and now we would like to consider a particular case: the 3-ellipsoid given by $ \frac{x_1^2}{a^2}+\frac{x_2^2}{b^2}+\frac{x_3^2}{c^2}+\frac{x_4^2}{d^2}=1$ with $a,b,c,d \in \mathbb{R}$ (also known as the quadraxial ellipsoid). This manifold has been studied in several works, for example \cite{sotomayor2014umbilic} describe lines of umbilic points, \cite{jandr} describe the the focal sets in $\mathbb{R}^4$, and \cite{dullin} study the integrability of the geodesic flow in the case of equal middle semi-axes. An immediate question is what parameterisation to use; in \cite{dullin} (and \cite{Itoh4}) the ellipsoidal/elliptic coordinate system is used, in \cite{jandr} the ambient coordinates are used, and in \cite{sotomayor2014umbilic} a combination of spherical polar and elliptic coordinates are used depending on the symmetry class of the ellipsoid. For this current work we will use the following spherical polar type parameterisation
\begin{align*}
   (a\sin{\theta}\sin{\phi}\cos{\psi},b\sin{\theta}\sin{\phi}\sin{\psi},c\sin{\theta}\cos{\phi},d\cos{\theta}),
\end{align*}
with $\theta,\phi \in (0,\pi),\ \psi \in (0,2\pi)$. 

To detect conjugate points along a geodesic we need to simultaneously solve the geodesic equations, the parallel transport equations for $N,B$, and the Jacobi equations in \eqref{jacobiequations} and for these we need the components of the Riemann curvature. In this parameterisation there is essentially one term:
\begin{align*}
   R_{1212}=\left(\sin ^2(\phi) \left(\frac{\cos ^2(\psi)}{a^2}+\frac{\sin ^2(\psi)}{b^2}\right)+\frac{\cos ^2(\phi)}{c^2}+\frac{\cot ^2(\theta)}{d^2}\right)^{-1},
\end{align*} and from this we have \begin{align*}
   R_{1313}= R_{1212} \sin ^2\phi, \quad
   R_{2323}=R_{1212} \sin ^2\theta \sin ^2\phi
\end{align*} and the other non-zero terms come from the symmetries of the Riemann curvature. All other terms are zero, and this markedly reduces computational time.

\begin{figure}
\begin{center}
\includegraphics[width=0.4\textwidth]{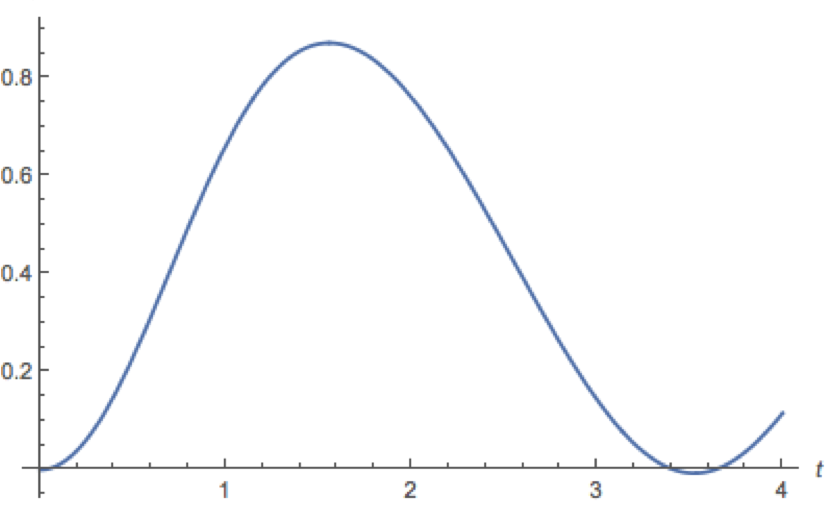}
\includegraphics[width=0.4\textwidth]{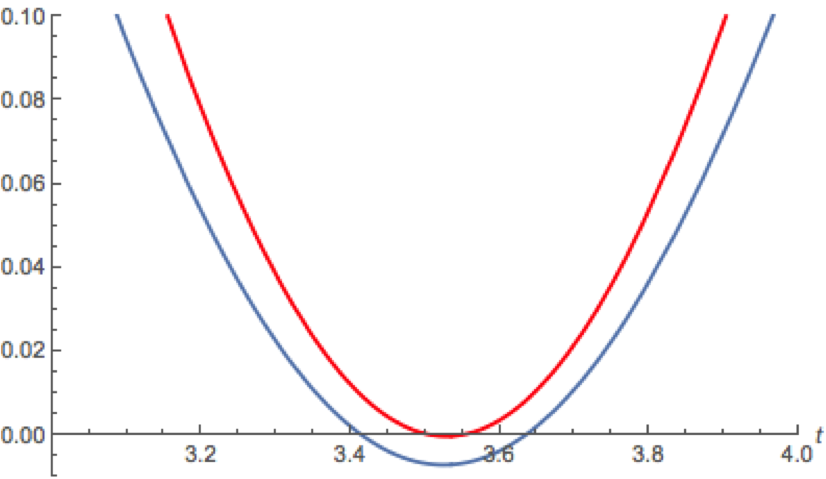}
\caption{The blue curve on the left (close up on the right) depicts the area of $J_\alpha$ plotted against $t$ for the initial conditions $(\dot{\theta},\dot{\phi},\dot{\psi})=(-0.730, 0.425, -0.774)$ and looks like a perturbed $\sin^{2}(t)$, as expected. The red curve depicts the area of $J_\alpha$ given by the initial conditions $(\dot{\theta},\dot{\phi},\dot{\psi})=(0.36,0.694,0.997)$, which touches the $t$-axis at a double-root.}\label{JArea}
\end{center}
\end{figure}

For demonstration purposes we will set the semi-axes to be $(a,b,c,d)=(0.9,1.05,1.15,1.2)$ and the base point $p$ to be $(\theta,\phi,\psi)=(\frac{\pi}{3},2.3,-\frac{\pi}{5})$. If we choose the tangent vector for a geodesic through $p$ from the unit sphere in $T_p\mathcal{M}$, we then choose $N(0),B(0)$ orthonormal to it and solve the geodesic equations, parallel transport equations and Jacobi equations \eqref{jacobiequations} twice with the initial conditions given in  \eqref{jxi},\eqref{jeta} to find $J_\xi$ and $J_\eta$, and hence $J_\alpha$. Some example plots of \eqref{ellipseareaeq} against $t$ are given in Figure \ref{JArea}. We see that these curves have the approximate form of $\sin^2(t)$ which dips down below the axis to give two roots as expected from the previous section; these two roots $t=R_1$ and $t=R_2$ are the first conjugate points corresponding to the vanishing of two distinct Jacobi fields. We note there are some directions for which the function in Figure \ref{JArea} touches the axis at a double root, these are the `umbilic directions' mentioned in the previous section.

With the values of $R_1$ and $R_2$ found, we may recover the $(\theta,\phi,\psi)$ coordinates of these two first conjugate points along the geodesic emanating from $p$, and we do the same for a sphere's worth of directions in $T_p\mathcal{M}$; finally we may plot the two sheets of the conjugate locus in the coordinate space, see Figure \ref{sheets}. Firstly we observe the two sheets intersect one another in a complex way, so in Figure \ref{sheets}(a) we plot the sheets with low opacity so we can see inside. In Figure \ref{sheets}(b) and (c) we show the first sheet by itself, so we can more easily see the singularity structure: there is a closed rib that encircles the sheet, and two partial ribs that smooth out at either end. The second sheet is the same: one closed rib and two partial ribs. When the sheets are put together, we see the four partial ribs join to form a third closed rib, shared between the two sheets; the points where the partial ribs meet correspond to umbilic directions. We note that the base point $p$ chosen previously has three distinct principal curvatures, and that the picture in Figure \ref{sheets}(a) mirrors the form of the focal sets of a 2-ellipsoid with three distinct semi-axes, as claimed in the Introduction.

\begin{figure}[!htb]
    \centering
    \begin{tabular}{cc}

\begin{subfigure}{0.5\textwidth}
    
    \hspace{-2cm}
    \includegraphics[width=1.9\textwidth]{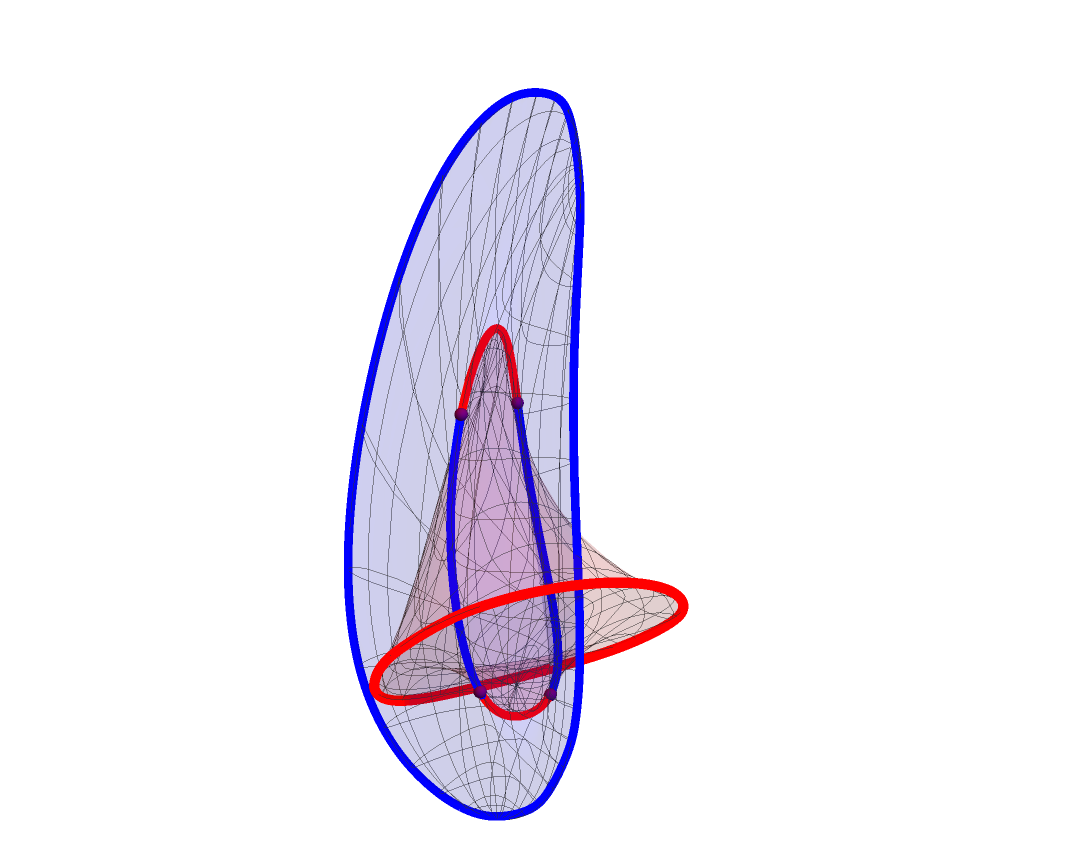}
    \caption{}
\end{subfigure}
    &
        \begin{tabular}{c}
        \smallskip
            \begin{subfigure}[t]{0.4\textwidth}
                \centering
                \includegraphics[width=0.9\textwidth]{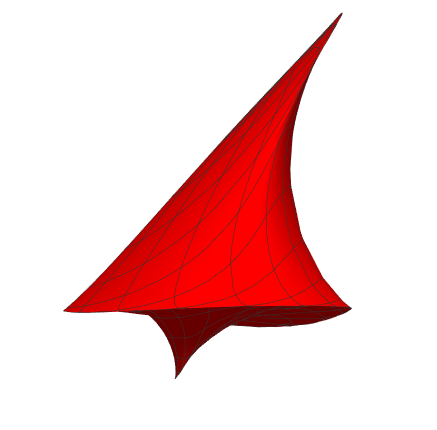}
                \caption{}
            \end{subfigure}\\
            \begin{subfigure}[t]{0.4\textwidth}
                \centering
                \includegraphics[width=0.9\textwidth]{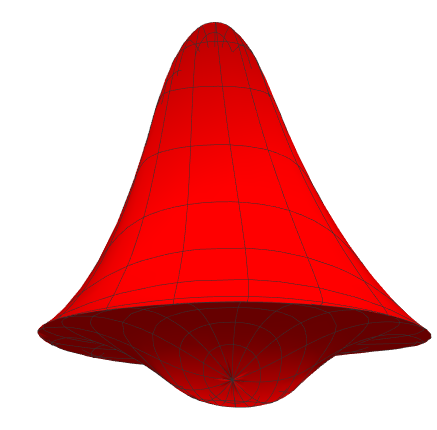}
                \caption{}
            \end{subfigure}
        \end{tabular}\\

    \end{tabular}
    \caption{The first conjugate locus in the quadraxial ellipsoid, for the semi-axes and base point given in the text. In (a) we see both sheets of the conjugate locus together (low-opacity). Shown thick are the 4 quarter-ribs of both sheets joining to form a closed curve, along with the complete rib from each sheet, and the umbilic directions shown in purple. In (b) and (c) we show the first sheet of the conjugate locus from two different angles; note the closed rib and two partial ribs.}\label{sheets}
\end{figure}

We now construct the previously described Jacobi coordinate system on the unit tangent-sphere given by the integral curves defined by the directions of collapsing Jacobi fields. At $t=0$ these direcitons are the initial velocity vectors of the vanishing Jacobi fields which correspond to the first and second conjugate points. As $t$ increases $J_\alpha$ grows and shrinks in these two directions, collapsing to a line segment at $t=R_1$ and $t=R_2$. To recover the corresponding $\alpha$-values, denoted $\alpha_1$ and $\alpha_2$, for which 
\begin{align*}
    \vert J_{\alpha}(t=R_i)\vert=0,\ i=1,2
\end{align*}
we compute the norm of $J_\alpha$ at both $t=R_1$ and $t=R_2$ over $\alpha \in [0,2\pi]$ and extract the roots. Hence, we may now construct the integral curves of $J'_{u}(0)$ and $J'_{v}(0)$ to give us the previously described Jacobi Coordinate system, shown in Figure \ref{JC0} on the left, for the base point mentioned previously (we let red and blue denote the $u$ and $v$ coordinate lines respectively). We note the similartiy between these Jacobi coordinate lines on the tangent-sphere and the curvature lines of the 2-ellipsoid, in particular we observe the coordinate lines as typically closed curves with a lemon structure at (hyperbolic) umbilic directions. 

Finally we find the ridge lines (the curves on the unit tangent-sphere which lift to the ribs under $X$), by detecting stationary points of $R_1$ and $R_2$ along the $u$ and $v$ coordinate lines respectively. That these ridge points must exist is immediate since for the Jacobi coordinate lines which are closed curves, the functions $R_1$ and $R_2$ restricted to them will generically have stationary points; as many maxima as minima and at least one of each.  Again we let red and blue denote $R_{1,u}=0$ and $R_{2,v}=0$ ridge lines respectively. We observe two closed ridge lines, one red and one blue, and one ridge line that passes from red to blue at the umbilic directions. These stationary points may be further separated into maxima and minima, allowing us to see some of the structure more clearly: the closed red ridge line represents minima of $R_1$, the closed blue line maxima of $R_2$. The broken line has maxima of $R_1$ meeting minima of $R_2$ at umbilic directions.

\begin{figure}
\begin{center}
{\includegraphics[width=0.5\textwidth]{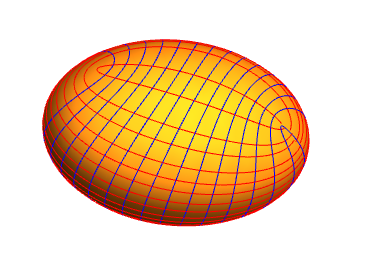}\includegraphics[width=0.4\textwidth]{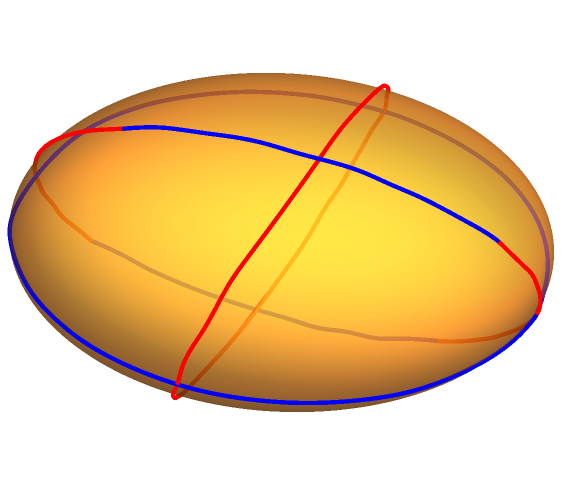}}
\caption{On the left the Jacobi coordinate lines on the unit tangent-sphere. The integral curves of the velocity vector of the first collapsing Jacobi fields are shown in red, those of the second are shown in blue. On the right are the ridge lines: we see two closed curves corresponding to the closed ribs on each sheet of the conjugate locus, and four partial ridges, corresponding to the four partial ribs, which meet at the umbilic directions to form a closed curve.}\label{JC0}
\end{center}
\end{figure}

While the plots shown seem quite complex, nonetheless we conjecture that the rib structure on the quadraxial ellipsoid is the simplest possible, that is (in analogy with the 4 vertex theorem or vierspitzensatz) the conjugate locus of a generic base point in a compact convex 3-manifold must have at least three ribs.

\section{Conclusions}

In this paper we have constructed a special coordinate system on the unit tangent sphere which we claim is a natural choice for studying the geometry and singular structures of the conjugate locus; it enables us to view the preimages of the ribs of the conjugate locus, observe the local structure surrounding umbilic directions, and view the similarities between this Jacobi coordinate system and the curvature lines of lower dimensional manifolds. Building on this approach we then developed a novel formalism for detecting conjugate points by tracking the area of the ellipse generated by a family of Jacobi fields around a central geodesic, before investigating the 3-ellipsoid as a case study to demonstrate the results and methodology described.

Further experimentation with the simulations described in this paper would be of interest: choosing different basepoints (including non-generic and umbilic points) to see how the structure of the conjugate locus is affected and test further the conjecture of the Introduction relating to ellipsoids; but also to broaden the study to manifolds other than ellipsoids, such as 3-d versions of the spherical harmonics menitoned in \cite{TWspherical,TWbif}. Also alluded to previously, it would be interesting to see how the conjugate locus transforms as the base point is dragged around in the manifold, especially in terms of the bifurcations of the ridges and ribs (see \cite{porteous}).

We also note the symmetric coefficient matrix of \eqref{jacobiequations}. While there is much known about constant symmetric matrices and normal modes, there is not much in the literature about time-dependent symmetric matrices (but see for example \cite{symmnorm}); perhaps the eigenvectors of this matrix might provide a natural choice for $N(0),B(0)$. We also observe how the rotation of the $J_\alpha$ ellipse in the $\{N,B\}$ plane, or lack thereof, might be worth further attention.

Finally, the method for detecting conjugate points extends very naturally to higher dimensions; for example if $n=4$ we will now have an ellipsoid in the orthogonal complement of $\gamma'$ which collapses to a disc at conjugate points, but provides still an oriented volume that changes sign (like a perturbed $\sin^3t$). The potential limitations here come with trying to visualize the results since now the coordinate space will be of dimesion $4$, but sections might be the way forward (see \cite{jandr}).

\bibliographystyle{plain}

\bibliography{clbib}

\begin{thebibliography}{10}

\bibitem{ardoy}
P.A. Ardoy.
\newblock Cut and conjugate points of the exponential map, with applications.
\newblock {\em doctoral thesis, Universidad Autonoma de Madrid},
  arXiv:1411.3933v1, 2014.

\bibitem{arnold}
V.~I. Arnol'd.
\newblock The geometry of spherical curves and the algebra of quaternions.
\newblock {\em Russian Math. Surveys}, 50(1):1--68, 1995.

\bibitem{bloch}
A.~M. Bloch and P.~E. Crouch.
\newblock Optimal control and geodesic flows.
\newblock {\em Systems and Control Letters}, 28(2):65--72, 1996.

\bibitem{Bonnard4}
B.~Bonnard and J.-B. Caillau.
\newblock Optimality results in orbit transfer.
\newblock {\em C. R. Acad. Sci. Paris}, 1(345):319--324, 2007.

\bibitem{Bonnard3}
B.~Bonnard and J.-B. Caillau.
\newblock Geodesic flow of the averaged controlled {K}epler equation.
\newblock {\em Forum Mathematicum}, 21(5):797--814, 2009.

\bibitem{Bonnard1}
B.~Bonnard, J.-B. Caillau, and G.~Picot.
\newblock Geometric and numerical techniques in optimal control of two- and
  three-body problems.
\newblock {\em Commun. Inf. Syst.}, 10(4):239--278, 2010.

\bibitem{Bonnard5}
B.~Bonnard, J.-B. Caillau, R.~Sinclair, and M.~Tanaka.
\newblock Conjugate and cut loci of a two-spere of revolution with application
  to optimal control.
\newblock {\em Ann. Inst. Henri Poincar\'{e} (C)}, 26(4):1081--1098, 2009.

\bibitem{Bonnard2}
B.~Bonnard, O.~Cots, and L.~Jassionnesse.
\newblock Geometric and numerical techniques to compute conjugate and cut loci
  on {R}iemannian surfaces.
\newblock {\em Geometric Control Theory and Sub-Riemannian Geometry}, 5:53--72,
  2014.

\bibitem{TWetds}
T.~Combot and T.~Waters.
\newblock Integrability conditions of geodesic flow on homogeneous {M}onge
  manifolds.
\newblock {\em Ergodic Theory and Dynamical Systems}, 35(1):111--127, 2015.

\bibitem{dullin}
Chris~M Davison, Holger~R Dullin, and Alexey~V Bolsinov.
\newblock Geodesics on the ellipsoid and monodromy.
\newblock {\em Journal of geometry and physics}, 57(12):2437--2454, 2007.

\bibitem{docarmoriemm}
M.~P. DoCarmo.
\newblock {\em Riemannian Geometry}.
\newblock Birkh\"{a}user, 1992.

\bibitem{gelf}
I.M. Gelfand and S.V. Fomin.
\newblock {\em Calculus of variations}.
\newblock Dover publications, 1963.

\bibitem{guilf}
B.~Guilfoyle and W.~Klingenberg.
\newblock Proof of the {C}aratheodory conjecture.
\newblock {\em arXiv:0808.0851v3}.

\bibitem{soto}
C.~Gutiérrez, J.~Sotomayor, and R.~Garcia.
\newblock Bifurcations of umbilic points and related principal cycles.
\newblock {\em Journal of Dynamics and Differential Equations}, 16:321--346,
  2004.

\bibitem{Itoh1}
J.-I. Itoh and K.~Kiyohara.
\newblock The cut loci and the conjugate loci on ellipsoids.
\newblock {\em Manuscripta Mathematica}, 114(2):247--264, 2004.

\bibitem{Itoh3}
J.-I. Itoh and K.~Kiyohara.
\newblock The cut loci on ellipsoids and certain {L}iouville manifolds.
\newblock {\em Asian J. Math.}, 14(2):257--290, 2010.

\bibitem{Itoh2}
J.-I. Itoh and K.~Kiyohara.
\newblock Cut loci and conjugate loci on {L}iouville surfaces.
\newblock {\em Manuscripta Mathematica}, 136(1-2):115--141, 2011.

\bibitem{Itoh4}
J.-I. Itoh and K.~Kiyohara.
\newblock The structure of the conjugate locus of a general point on ellipsoids
  and certain liouville manifolds.
\newblock {\em Arnold Mathematical Journal}, 7:31--90, 2021.

\bibitem{izumiya}
S.~Izumiya, M.~Fuster, M.~Ruas, and F.~Tari.
\newblock {\em Differenitial Geometry from a Singularity Theory Viewpoint}.
\newblock World Scientific, 2016.

\bibitem{jacobi}
C.~G.~J. Jacobi.
\newblock {\em Vorlesungen \"{u}ber dynamik}.
\newblock Gehalten an der Universit\"{a}t zu K\"{o}nigsberg im Wintersemester
  1842-1843 und nach einem von C. W. Borchart ausgearbeiteten hefte. hrsg. von
  A. Clebsch.

\bibitem{jandr}
A.~Joets and R.~Ribotta.
\newblock Caustique de la surface elllpsoïdale à trois dimensions.
\newblock {\em Experimental Mathematics}, 8(1):49--55, 1999.

\bibitem{sotomayor2014umbilic}
D{\'e}bora Lopes, Jorge Sotomayor, and Ronaldo Garcia.
\newblock Umbilic singularities and lines of curvature on ellipsoids of
  $\mathbb{R}^4$.
\newblock {\em Bulletin of the Brazilian Mathematical Society, New Series},
  45(3):453--483, 2014.

\bibitem{symmnorm}
I.~Luzuian, M.~Palermo, and J.G. Muga.
\newblock Dynamical normal modes for time-dependent hamiltonians in two
  dimensions.
\newblock {\em Phys.Rev.A}, 95:022130, 2017.

\bibitem{mcoff1}
R.I. McLachlan and C.~Offen.
\newblock Hamiltonian boundary value problems, conformal symplectic symmetries,
  and conjugate loci.
\newblock {\em New Zealand Journal of Mathematics}, 48:83--99, 2018.

\bibitem{mcoff2}
R.I. McLachlan and C.~Offen.
\newblock Preservation of bifurcations in {H}amiltonian boundary value problems
  under discretisation.
\newblock {\em Foundations of Computational Mathematics}, 20:1363--1400, 2020.

\bibitem{myers}
S.~B. Myers.
\newblock Connections between differential geometry and topology, {I}: simply
  connected surfaces.
\newblock {\em Duke Math. J.}, 1(3):376--391, 1935.

\bibitem{poincare}
H.~Poincar\'{e}.
\newblock Sur les lignes geod\'{e}siques des surfaces convexes.
\newblock {\em Trans. Am. Math. Soc.}, 17:237--274, 1905.

\bibitem{porteous}
I.R. Porteous.
\newblock {\em Geometric Differentiation}.
\newblock Cambridge University Press, 2001.

\bibitem{Sinclair1}
R.~Sinclair.
\newblock On the last geometric statement of {J}acobi.
\newblock {\em Experimental Mathematics}, 12(4):477--485, 2003.

\bibitem{Sinclair5}
R.~Sinclair and M.~Tanaka.
\newblock Loki: software for computing cut loci.
\newblock {\em Experimental Mathematics}, 11(1):1--25, 2002.

\bibitem{Sinclair4}
R.~Sinclair and M.~Tanaka.
\newblock A bound on the number of endpoints of the cut locus.
\newblock {\em LMS J. Comp. Math.}, 9:21--39, 2006.

\bibitem{Sinclair2}
R.~Sinclair and M.~Tanaka.
\newblock Jacobi's last geometric statement extends to a wider class of
  {L}iouville surfaces.
\newblock {\em Mathematics of Computation}, 75(256):1779--1808, 2006.

\bibitem{Sinclair3}
R.~Sinclair and M.~Tanaka.
\newblock The cut locus of a two-sphere of revolution and {T}oponogov's
  comparison theorem.
\newblock {\em Tohoku Math. J.}, 59:379--399, 2007.

\bibitem{hannes}
H.~Thielhelm.
\newblock Numerische modellierung und l\"{o}sung von geod\"{a}tischen
  randwertproblemen.
\newblock {\em Ph.D. Thesis, Leibniz Universit\"{a}t Hannover}, 2016.

\bibitem{TWrot}
T.~Waters.
\newblock The conjugate locus on convex surfaces.
\newblock {\em Geometriae Dedicata}, 200:241--254, 2019.

\bibitem{TWmonge}
T.~J. Waters.
\newblock Non-integrability of geodesic flow on certain algebraic surfaces.
\newblock {\em Physics Letters A}, 376(17):1442--1445, 2012.

\bibitem{TWspherical}
T.~J. Waters.
\newblock Regular and irregular geodesics on spherical harmonic surfaces.
\newblock {\em Physica D: Nonlinear Phenomena}, 241(5):543--552, 2012.

\bibitem{TWbif}
T.~J. Waters.
\newblock Bifurcations of the conjugate locus.
\newblock {\em Journal of Geometry and Physics}, 119:1--8, 2017.

\end{thebibliography}

\end{document}